\title{{Flip-distance between $\alpha$-orientations of graphs embedded on plane and sphere}\thanks{Research supported by NSFC (No. 11471273 and 11561058.)}}
\author{{ Weijuan Zhang$^{1,2}$,
          Jianguo Qian$^{1}$\footnote{Corresponding author, email address: jgqian@xmu.edu.cn} ,
          Fuji Zhang$^{1}$ }\\
\small  $^{1}$School of Mathematical Sciences,  Xiamen University\\
\small  Xiamen, Fujian 361005, P.R.China \\
\small  $^{2}$School of Mathematical Sciences, Xinjiang Normal University\\
\small  Urumqi, Xinjiang 830054, P.R.China \\}
\date{}
\begin{document}
\maketitle

\newtheorem{lem}{Lemma}[section]
\newtheorem{thm}[lem]{Theorem}
\newtheorem{prop}[lem]{Proposition}
\newtheorem{cor}[lem]{Corollary}
\newtheorem*{pf}{Proof}

\begin{abstract}  Felsner introduced a  cycle reversal, namely  the `flip' reversal, for $\alpha$-orientations (i.e., each vertex admits a  prescribed out-degree) of a graph $G$ embedded on the plane and further proved that  the set of all the $\alpha$-orientations of $G$ carries a distributive lattice  with respect to the flip reversals. In this paper, we give an explicit formula for the minimum number of flips needed to transform one $\alpha$-orientation into another for graphs embedded on the plane or sphere, respectively.  \\

\noindent\textbf{Keywords:} $\alpha$-orientation; flip distance; plane graph; sphere graph
\end{abstract}

\section {\large Introduction}
Research in graph orientation has a long history that reveals many interesting structural insights and applications. A classical example would be the one given by Robbins in 1939, which states that an undirected graph  has a strongly connected orientation  if and only if it is 2-edge connected. This result was then generalized by Nash-Williams to strongly $k$-edge connected orientations for any positive $k$ \cite{Nash}.  In the study of graph orientation, a particular concern is to orient a graph with certain degree-constraints on its vertices.
Frank \cite{Frank} established a characterization for the existence of those orientations in which the in-degree of each vertex has to lie within certain bounds.  In \cite{Hakimi}, Hakimi gave a characterization for a graph to have an orientation such that each vertex has a  prescribed out-degree. Such orientation is later called the $\alpha$-orientation  \cite{Felsner}.

Graph orientation has many interesting connections with certain combinatorial structures in graphs, such as the spanning trees \cite{Felsner}, bipartite perfect matchings (or more generally, bipartite $f$-factors) \cite{Kenyon,Lam,Propp}, Schnyder woods \cite{Fraysseix}, bipolar orientations \cite{Fraysseix2} and 2-orientations of quadrangulations, primal-dual orientations \cite{Disser}, transversal structures \cite{Fusy}  and $c$-orientations of the dual of plane graph \cite{Knauer,Propp}. Remarkably, all these structures can be encoded as the  $\alpha$-orientations \cite{Felsner2}, which has extensive applications, e.g.,  in drawing algorithms \cite{Barrera,Bonichon,Fusy}, and  enumeration and random sampling of graphs \cite{Creed,Fusy2}.

To deal with the relation among orientations, cycle reversal has been shown as a powerful method since it preserves the out-degree of each vertex and the connectivity of the orientations. Various types of cycle reversals were introduced subject to certain problem-specific requirements. An earlier example is  the cycle transformation introduced by Nash-Williams \cite{Nash} which says that any two $k$-connected orientations of a $2k$-edge connected graph can be transformed from each other by a sequence of cycle transformations or path transformations.

Cycle reversal for the orientations of plane graphs (graphs embedded on the plane) received particular attention. For example, Nakamoto \cite{Nakamoto}  considered the 3-cycle reversal to deal with the $(*)$-orientations in plane triangulation where each vertex on the outer facial cycle has out-degree 1 while each of the other vertices has out-degree 3. In \cite{ZhangF}, Zhang et al. introduced the Z-transformation to study the connection among perfect matchings of hexagonal systems  and later was extended to general plane bipartite graphs \cite{Zhang2}.

For orientations of general plane graphs, a natural considering  of cycle  reversal is to reverse a directed facial cycle. However, an orientation of a plane graph does not always have such a directed facial cycle, even if it has `many directed cycles'. In \cite{Felsner}, Felsner introduced a type of cycle reversal, namely  the `flip',  defined on the so-called essential cycles and proved that, any $\alpha$-orientation of a plane graph $G$ can transform into a particular $\alpha$-orientation by a flip sequence and further proved that the set of all $\alpha$-orientations of $G$ carries a distributive lattice  with respect to the flip reversals. In a strongly connected $\alpha$-orientation  of a plane graph, it is known that an essential cycle is exactly an inner facial cycle. In this sense, the notion `essential cycle' is a very nice generalization of facial cycle, which has been widely applied in the study of $\alpha$-orientations of plane graphs.

In this paper, we give a necessary-sufficient condition for that an $\alpha$-orientation of a plane graph can be transform into another by a flip sequence. In contrast to a plane graph, we will see in the last section that any two $\alpha$-orientations of a sphere graph (a graph embedded on the sphere) can always be transformed from each other by a flip sequence. Further, we give an explicit formula of  the `flip-distance'  between two $\alpha$-orientations, that is, the minimum number of flips needed to transform  one $\alpha$-orientation into another, for plane graphs and sphere graphs, respectively.  In our study, the `standard cycle system' introduced in the following section and the idea of `length function'  \cite{Hassin} and `$\alpha$-potential' \cite{Felsner} defined on the faces of the embedded graph play  important roles.

\section{Preliminaries}
For a graph $G$  we denote by $V(G)$ and $E(G)$ the vertex set and edge set of $G$, respectively. An {\it orientation} of  $G$ is an assignment of a direction to each edge of $G$. A {\it plane graph} and {\it sphere graph} are an embedding of a planar graph on the plane and sphere, respectively. Given  a plane graph $G$ with $n$ vertices $v_1,v_2,\cdots,v_n$ and an out-degree function $\alpha=(\alpha(v_1),\alpha (v_2),\cdots, \alpha (v_n))$ of  $G$, an orientation $D$  of $G$ is called an $\alpha$-{\it orientation} if  $d^+_D(v_i)=\alpha (v_i)$ for all $v_i\in V(G)$, where $d^+_D(v_i)$ is the out-degree of $v_i$ in $D$. We call an out-degree function $\alpha$ {\it feasible} for a graph $G$ if  $\alpha$-orientation of $G$ exists. The question whether an out-degree function $\alpha$  is feasible for $G$ can be translated to the construction of a maximal flow in a graph associated with $G$, and therefore can be solved  in polynomial time \cite{Felsner}.

An edge in a graph $G$ is called $\alpha$-{\it rigid} if it has the same direction in every $\alpha$-orientation of $G$. For a cycle $C$ of a plane graph $G$, the {\it interior cut} of $C$ is the edge cut consisting of  all the edges
connecting $C$ to an interior vertex of $C$. A simple cycle $C$ of a plane graph $G$ is called {\it essential}  \cite{Felsner} (with respect to $\alpha$) if $C$ is chord-free, all the edges in the interior cut of $C$ are rigid and there exists an $\alpha$-orientation such that $C$ is directed.

A directed cycle $C$ of an $\alpha$-orientation of a plane graph is called {\it counterclockwise} (resp., {\it clockwise}), or ccw (resp., cw) for simplicity, if the interior region of $C$  is
to the left (resp., right) of $C$.  A {\it flip} taking on an essential ccw cycle $C$ is the reorientation of $C$ from ccw to cw  \cite{Felsner}. The flip reversal induces a partial ordering relation `$\preccurlyeq$' on the set of all $\alpha$-orientations of a plane graph,  that is, two $\alpha$-orientations $D$ and $D'$ have the relation $D\preccurlyeq D'$ if $D$ can be transformed  from $D'$ by a sequence of flips. We denote by $d(D',D)$  the {\it flip-distance} from $D'$ to $D$ if $D\preccurlyeq D'$.

By the definition of rigid edge, we can see that each component obtained from an $\alpha$-orientation $D$ by deleting all the rigid edges is strongly connected \cite{Knauer}.  To simplify our discussion, in the following we  restrict our attention only to strongly connected orientations and therefore, the underlying graph $G$ is 2-edge connected. Further, if $G$ is not 2-connected then a strongly connected $\alpha$-orientation $D$ of $G$ is the edge-disjoint union of the restriction  of $D$ on the 2-connected components of $G$. For this reason, we only consider the case that $G$ is 2-connected and, therefore, any inner facial cycle $f$ (boundary of an inner face) in $G$ is simple, that is, each vertex on $f$ appears only once.

 As mentioned earlier, in a strongly connected $\alpha$-orientation of a plane graph, since every edge is not rigid, an essential cycle now is exactly a facial cycle. We notice that this is also the case for a sphere graph. Therefore, the notion `flip'  for the ccw facial cycles and the `flip-distance' for the $\alpha$-orientations of a sphere graph  can be naturally defined. Further, for a sphere graph $G$, only in the very special case when $G$ is a simple cycle $C$,  $C$ is the facial cycle of two faces of $G$ and thus, in any orientation of $G$,  $C$ is ccw  with respect to one face if and only if $C$ is cw with respect to the other.

For a plane or sphere graph $G$, we denote by ${\cal F}(G)$ the set of all the faces of $G$. In particular, if $G$ is a plane graph then we denote by  $f_{\rm out}(G)$ and ${\cal F}_{\rm inner}(G)$ the outer face and the set of all the inner faces of $G$, respectively. For a cycle $C$ in a plane graph $G$, we denote by ${\cal F}_{\rm int}(C)$  the set of the faces in the
interior region of $C$. For two edge-disjoint cycles $C_1$ and $C_2$ of a plane graph, if ${\cal F}_{\rm int}(C_2)\subset{\cal F}_{\rm int}(C_1)$ then, except some possible vertices in common with $C_1$, the cycle $C_2$ lies in the interior region of $C_1$. In this sense, we also say that $C_2$ is {\it contained in} $C_1$, or conversely, $C_1$ {\it contains} $C_2$.

The following result will be used in our forthcoming argument.

\begin{lem}\label{lem1} \cite{Felsner}  If $C$ is a ccw cycle in a strongly connected $\alpha$-orientation $D$ of a 2-connected plane graph, then there is a  flip sequence $F$ consisting of $|{\cal F}_{\rm int}(C)|$ flips which reverses $C$ from ccw into cw and keep the orientations of all other edges of $D$ invariant.  More specifically, the number of the flips in $F$ taking on each inner face $f$ equals 1 if $f\in {\cal F}_{\rm int}(C)$, and equals 0 if $f\notin {\cal F}_{\rm int}(C)$.
\end{lem}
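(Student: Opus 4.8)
\section*{Proof proposal}

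The plan is to prove this by strong induction on the number $k=|{\cal F}_{\rm int}(C)|$ of faces enclosed by $C$, separating the \emph{net effect} of the sequence (an easy double-counting argument) from the \emph{existence of an admissible order} (the real content). First I would record the easy direction: any sequence of flips that reverses each face of ${\cal F}_{\rm int}(C)$ exactly once, in whatever order happens to be admissible, automatically has the claimed effect. Indeed, a flip of a face $g$ reverses precisely the edges on the boundary of $g$ and preserves all out-degrees, so the intermediate orientations remain $\alpha$-orientations. Each edge of $C$ lies on exactly one face of ${\cal F}_{\rm int}(C)$, namely the face on its interior side, whereas each edge strictly inside $C$ lies on exactly two such faces, and no edge outside $C$ lies on any. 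Hence flipping every face of ${\cal F}_{\rm int}(C)$ once reverses each edge of $C$ exactly once and each interior edge exactly twice, leaving everything outside $C$ untouched; the net result is exactly the reversal of $C$, and the ``more specifically'' count ($1$ on each inner face of ${\cal F}_{\rm int}(C)$, $0$ elsewhere) is immediate from the construction.

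The substance of the lemma is therefore the existence of an order in which each successive face is ccw at the moment it is flipped. For the base case $k=1$ the cycle $C$ bounds a single inner face, is itself that facial cycle, and is flipped directly. For $k\ge 2$ the plan is to locate a ccw facial cycle $g\in{\cal F}_{\rm int}(C)$ sharing a contiguous arc with $C$, flip it first, and observe that the symmetric difference $C\mathbin{\triangle}g$ is again a ccw cycle $C'$ in the new orientation with ${\cal F}_{\rm int}(C')={\cal F}_{\rm int}(C)\setminus\{g\}$, so that $|{\cal F}_{\rm int}(C')|=k-1$. Applying the induction hypothesis to $C'$ yields a further $k-1$ flips, each face of ${\cal F}_{\rm int}(C')$ used once; together with the initial flip of $g$ this is a sequence of $k$ flips using each face of ${\cal F}_{\rm int}(C)$ exactly once, which by the first paragraph reverses $C$ and fixes all other edges.

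The crux, and the step I expect to be the main obstacle, is the sub-claim that a non-facial ccw cycle $C$ always has a ccw facial cycle $g\in{\cal F}_{\rm int}(C)$ meeting it along an arc. I would prove it by a \emph{rightmost directed path} construction: starting from an edge $u\to v$ of $C$ and repeatedly leaving each reached vertex along the edge that is rightmost (most clockwise) relative to the edge just traversed, one builds a directed walk hugging the interior boundary; strong connectivity forbids interior sources and sinks, so the walk cannot stall and must close up, and the rightmost rule forces the enclosed region to be a single inner face, giving a ccw facial cycle incident to $C$. As supporting structure I would also note the chord/interior-path splitting fact: if two vertices of a ccw cycle are joined by a directed interior path, that path together with one of the two arcs forms a strictly smaller ccw cycle, which confirms such sub-structures exist whenever $k\ge 2$. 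Finally I would verify the bookkeeping that $C\mathbin{\triangle}g$ is a single simple ccw cycle with exactly one fewer enclosed face, so that the induction closes cleanly.
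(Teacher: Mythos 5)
Your first paragraph (the double-counting argument showing that \emph{any} admissible sequence flipping each face of ${\cal F}_{\rm int}(C)$ exactly once has the stated net effect) is correct, and you rightly identify that the whole content of the lemma is the existence of an admissible order. But your mechanism for producing that order has a genuine gap: the crux sub-claim --- that a non-facial ccw cycle $C$ always has a ccw facial cycle $g\in{\cal F}_{\rm int}(C)$ meeting $C$ along an arc --- is false. Counterexample: take the ccw square $C: a\to b\to c\to d\to a$, a concentric inner square $p\to q\to r\to s\to p$ (also ccw), and four spokes $a\to p$, $q\to b$, $c\to r$, $s\to d$; this is 2-connected and strongly connected, so it is a strongly connected $\alpha$-orientation for its own out-degree vector. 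None of the four annulus faces is a directed cycle, so the only ccw facial cycle in ${\cal F}_{\rm int}(C)$ is the inner square, which shares no vertex with $C$. Consequently no face touching $C$ can be flipped first, the peeling step $C\mathbin{\triangle} g$ cannot even begin, and any admissible sequence here must start at the inner square and work outward. The same example defeats your rightmost-walk justification: launched from any edge of $C$, the unique directed continuation at each vertex just circulates around $C$ itself; more generally a rightmost \emph{directed} walk does not trace a face boundary, because the next edge clockwise in the rotation may be oriented against the walk, at which point the rightmost out-edge leaves the face. So the enclosed region of the closed walk need not be a single face.

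The standard repair --- Felsner's argument, which is also exactly the technique this paper uses in its proof of Lemma \ref{lem2} (note the paper does not reprove Lemma \ref{lem1}; it cites \cite{Felsner}) --- inducts not by peeling one boundary face but by \emph{splitting} $C$: if $C$ is not facial, 2-connectedness and strong connectivity yield a chord or a directed path $P$ through the interior from a vertex $u$ of $C$ to a vertex $v$ of $C$, internally disjoint from $C$. Then $C_1=P(u\to v)\cup C(v\to u)$ is a ccw cycle enclosing strictly fewer faces; reversing it by induction flips each face of ${\cal F}_{\rm int}(C_1)$ once and reverses $P$, after which $C_2=C(u\to v)\cup P^{-1}(v\to u)$ is ccw and is reversed by a second application of the induction hypothesis. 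Each face of ${\cal F}_{\rm int}(C)$ is flipped exactly once, ${\cal F}_{\rm int}(C_1)\cup{\cal F}_{\rm int}(C_2)={\cal F}_{\rm int}(C)$, and the edges of $P$ are reversed twice, hence restored --- which is precisely your paragraph-one accounting, now with a valid existence argument behind it. If you rewrite the induction along these lines (handling the degenerate $u=u'$ case by the splitting trick used in the paper's Lemma \ref{lem2}), your proof goes through.
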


 A set ${\cal C}$ of edge disjoint directed cycles in a directed plane graph or sphere graph is called a {\it standard cycle system} if  any two cycles in ${\cal C}$ are pairwise uncrossed. In the case of plane graph, we see that any two cycles $C_1$ and $C_2$ in a  standard cycle system satisfy either ${\cal F}_{\rm int}(C_1)\cap {\cal F}_{\rm int}(C_2)=\emptyset$, or ${\cal F}_{\rm int}(C_1)\subset {\cal F}_{\rm int}(C_2)$, or ${\cal F}_{\rm int}(C_2)\subset {\cal F}_{\rm int}(C_1)$. A directed graph $F$ (not necessarily connected) is called {\it oriented Eulerian} if each component of $F$ is directed Eulerian, that is, the out-degree of each vertex equals its in-degree. The following lemma might be a known result but we give its proof for the self-completeness.

\begin{lem}\label{lem3} Any oriented Eulerian plane graph or sphere graph $F$ can be  partitioned into the union of pairwise edge disjoint and uncrossed directed cycles, that is,  a standard cycle system.
\end{lem}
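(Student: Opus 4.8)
The plan is to argue by induction on the number of edges $|E(F)|$, after two reductions. First, since being \emph{uncrossed} is a purely local condition on the rotation at each shared vertex (two edge-disjoint cycles cross at a common vertex exactly when their four incident half-edges interleave in the cyclic order given by the embedding), it is independent of which face is regarded as the outer one. Hence for a sphere graph I would puncture the sphere at an interior point of an arbitrarily chosen face, turning $F$ into a plane graph, carry out the argument there, and then observe that the resulting cycle system is still standard on the sphere. Second, because distinct components of $F$ occupy disjoint regions, cycles coming from different components are automatically uncrossed, so it suffices to produce the decomposition one component at a time; thus I may set up the induction for a plane oriented Eulerian graph and use the induction hypothesis freely on the (possibly disconnected) graph obtained after deleting some edges.

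For the inductive step assume $F$ is nonempty. Since every vertex has equal in- and out-degree, following outgoing edges from the tail of any edge must eventually repeat a vertex, so $F$ contains a directed cycle; among all directed cycles of $F$ I would choose one, call it $C$, whose interior face set ${\cal F}_{\rm int}(C)$ is inclusion-minimal. Deleting $E(C)$ leaves an oriented Eulerian plane graph $F-E(C)$ with fewer edges, so by the induction hypothesis it admits a standard cycle system ${\cal C}'$. The proposed decomposition of $F$ is then ${\cal C}'\cup\{C\}$; it is edge-disjoint by construction, and the cycles of ${\cal C}'$ are pairwise uncrossed by induction, so the only thing left to check is that $C$ is uncrossed with every cycle of ${\cal C}'$.

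This last verification is the heart of the matter and the step I expect to be the main obstacle, since it is exactly where planarity must be used (a single pair of cycles interleaving at one common vertex would be impossible to separate, but such a configuration cannot occur in a planar embedding). I would argue by contradiction: suppose some $C'\in{\cal C}'$ crosses $C$. As $C'$ is edge-disjoint from $C$ yet interleaves with it at a common vertex, $C'$ must pass from the exterior into the interior of $C$, so it contains a maximal subpath $A$ lying in the interior of $C$ with both endpoints, say $w_i$ and $w_j$, on $C$. Orienting $A$ along $C'$, say from $w_i$ to $w_j$, the directed cycle $C$ is split by $w_i$ and $w_j$ into two directed arcs, one of which runs from $w_j$ to $w_i$; concatenating $A$ with this arc yields a simple directed cycle of $F$ whose interior is a proper subregion of ${\cal F}_{\rm int}(C)$, contradicting the minimality of $C$. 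The delicate points to make precise are that $A$ is genuinely interior to $C$ (so that the new cycle is simple and its interior is strictly smaller, which requires that the region cut off on the other side of $A$ still contains a face) and that the two arcs concatenate into a consistently oriented cycle; both follow from $C$ and $C'$ being directed together with the planar embedding, and this is where the argument must be driven home.
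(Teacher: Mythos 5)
Your proposal is correct in substance but takes a genuinely different route from the paper's proof. The paper inducts on the number of faces $|{\cal F}(F)|$ and peels off a \emph{directed facial cycle} $f$: since the face bounded by $f$ contains no edges of $F$, no edge-disjoint cycle can interleave with $f$ at a shared vertex, so the uncrossedness of ${\cal C}\cup\{f\}$ is automatic and the whole proof is a few lines --- at the price of asserting without proof that an oriented Eulerian plane or sphere graph always has a directed facial cycle (the paper calls this ``obvious''). You instead induct on $|E(F)|$ and peel off a directed cycle $C$ with inclusion-minimal ${\cal F}_{\rm int}(C)$, whose existence \emph{is} genuinely immediate, and then do the real work in showing that such a $C$ cannot be crossed: an interior excursion $A$ of a crossing cycle $C'$, concatenated with the arc of $C$ from $w_j$ to $w_i$, produces a directed cycle of $F$ with strictly smaller interior face set. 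That rerouting argument is essentially the proof of the directed-facial-cycle existence that the paper leaves implicit, so your version is more self-contained, if longer; your puncturing reduction for the sphere and the component-wise remark are both sound, whereas the paper handles plane and sphere uniformly inside the same face-count induction. Two small repairs to the points you yourself flagged: take $A$ to be a \emph{first-return} interior subpath rather than a maximal one (a maximal interior subpath may touch $C$ at intermediate vertices, spoiling the simplicity of the concatenated cycle), and treat the degenerate case $w_i=w_j$ separately, where $A$ itself is the smaller directed cycle. In both cases properness of the containment ${\cal F}_{\rm int}(\cdot)\subsetneq{\cal F}_{\rm int}(C)$ follows as you indicate: the part of the interior of $C$ on the far side of $A$ is a nonempty open region bounded by arcs of the embedded graph, and since the embedding of $F$ is a finite one-dimensional complex it cannot cover that region, which therefore contains a face of $F$; that face lies in ${\cal F}_{\rm int}(C)$ but not inside the new cycle.
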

\begin{proof}  We apply induction on $|{\cal F}(F)|$, that is, the number of the faces in $F$.

Let $f$ be a directed  facial cycle of $F$ (the existence of $f$ is obvious since $F$ is oriented Eulerian). Then $F\setminus E(f)$ is still an  oriented Eulerian graph, where $F\setminus E(f)$ is the subgraph of $F$ obtained from $F$ by removing the edges on $f$. Moreover, $F\setminus E(f)$  has less faces than $F$ has. So by the induction hypothesis, $F\setminus E(f)$  can be partitioned into a standard cycle system ${\cal C}$. Notice that the cycles in ${\cal C}$ and the facial cycle $f$ are edge disjoint and uncrossed. This means that ${\cal C}\cup \{f\}$ is a standard cycle system of $F$, which completes our proof.
\end{proof}
 \section{Graphs embedded on the plane}

\begin{lem}\label{lem2}  Let $D$ be a strongly connected $\alpha$-orientation of a 2-connected plane graph $G$. Let $C^-$ be a simple ccw cycle in $D$ and let ${\cal C}^+=\{C_1^+,C_2^+,\cdots,$ $C^+_t\}$ be the set of edge disjoint, uncrossed and pairwise exclusive simple cw cycles contained in  $C^-$. Then there is a  flip sequence $F$ consisting of
\begin{equation}
\left|{\cal F}_{\rm int}(C^-)\setminus\bigcup_{i=1}^t{\cal F}_{\rm int}(C^+_i)\right|
\end{equation}
 flips which reverses all cycles  $C^-,C_1^+,C_2^+,\cdots,C^+_t$ and keep the orientations of all other edges of $D$ invariant. More specifically, for any face $f$, the number of flips in $F$  taking on  $f$  equals 1 if  $f\in{\cal F}_{\rm int}(C^-)\setminus\bigcup_{i=1}^t{\cal F}_{\rm int}(C^+_i)$, and equals 0 otherwise.
 \end{lem}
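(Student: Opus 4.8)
The plan is to avoid constructing the flip sequence by hand, and instead to identify the target orientation and read off everything from the lattice/potential structure of \cite{Felsner}. First I would let $D_1$ be the orientation obtained from $D$ by simultaneously reversing the edge-disjoint directed cycles $C^-,C_1^+,\dots,C_t^+$. Reversing a directed cycle preserves the out-degree of every vertex, so $D_1$ is again an $\alpha$-orientation, and an edge differs in direction between $D$ and $D_1$ precisely when it lies on one of these cycles. Thus the whole lemma reduces to exhibiting a flip sequence from $D$ to $D_1$ of the asserted length and face-distribution, and it is this comparison of $D$ and $D_1$ that I would make the backbone of the argument.

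Next I would bring in the $\alpha$-potential $\wp$ on ${\cal F}(G)$ (normalised by $\wp(f_{\rm out})=0$), using the fact that a flip on a single ccw facial cycle lowers the potential of that face by one and changes no other potential, and that the potential difference of two $\alpha$-orientations differing by the reversal of a directed cycle $C$ depends only on $C$: it drops by one on every face inside a reversed ccw cycle and rises by one on every face inside a reversed cw cycle (this is exactly Lemma \ref{lem1} read through $\wp$, made additive over edge-disjoint reversals). Writing $S={\cal F}_{\rm int}(C^-)\setminus\bigcup_{i=1}^t{\cal F}_{\rm int}(C^+_i)$, the contribution of reversing the ccw cycle $C^-$ is $-1$ on all of ${\cal F}_{\rm int}(C^-)$, while each cw cycle $C_i^+$ contributes $+1$ on ${\cal F}_{\rm int}(C_i^+)$. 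Because the $C_i^+$ are pairwise exclusive and all contained in $C^-$, each interior face lies inside at most one hole, so these contributions do not overlap and sum to $\wp_{D_1}(f)-\wp_D(f)=-1$ exactly on $S$ and $0$ elsewhere.

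From $\wp_{D_1}\le\wp_D$ pointwise I would then conclude $D_1\preccurlyeq D$, so that a flip sequence $F$ from $D$ to $D_1$ exists. Since each flip lowers exactly one face potential by one and a flip sequence never raises a potential, the number of flips of $F$ on a face $f$ must equal the total drop $\wp_D(f)-\wp_{D_1}(f)$, which is $1$ for $f\in S$ and $0$ otherwise. This simultaneously pins the length of $F$ to $|S|$, i.e.\ to the quantity in $(1)$, and delivers the stated per-face count, with no face outside $S$ ever being touched.

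The step I expect to be the main obstacle, and where I would be most careful, is the passage from the potential inequality to an \emph{honest} flip sequence. The difficulty is conceptual rather than computational: each $C_i^+$ runs cw, i.e.\ against the flip direction, so the holes cannot be reversed by flips in isolation, and a naive ``reverse $C^-$, then un-reverse the holes'' scheme would mix flips with un-flips and flip faces inside the holes. What rescues the argument is precisely that the holes sit strictly inside the ccw cycle $C^-$: the downward contribution of $C^-$ dominates the upward contributions of the $C_i^+$ everywhere, keeping the potential change nonnegative in the right direction and $S\ge 0$-valued, which is exactly what guarantees a monotone (pure-flip) descent from $D$ to $D_1$. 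I would therefore devote the care to verifying the potential computation of the previous paragraph in full, using uncrossedness and pairwise exclusivity to rule out a face being counted by two of the cycles at once, since once that computation is secured the lattice characterisation of \cite{Felsner} yields the sequence, its length, and its face-distribution at a stroke.
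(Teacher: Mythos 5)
Your reductions are sound as far as they go. Taking $D_1$ to be $D$ with $C^-,C_1^+,\dots,C_t^+$ reversed, the family $\{C^-,C_1^+,\dots,C_t^+\}$ is indeed a standard cycle system of $D-D_1$, so by the paper's identity $z_F(f)=c^--c^+$ the potential difference is $-1$ exactly on $S={\cal F}_{\rm int}(C^-)\setminus\bigcup_{i=1}^t{\cal F}_{\rm int}(C_i^+)$ and $0$ elsewhere (uncrossedness and pairwise exclusivity do prevent double counting, as you anticipated); and you are right that once \emph{any} flip sequence from $D$ to $D_1$ exists, its per-face counts are forced to be the indicator of $S$, since each flip lowers exactly one face value by one — this pins down both the length $|S|$ and the face distribution for free. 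The genuine gap is the step you yourself flag as the obstacle: the implication ``$z_{D-D_1}\ge 0$ pointwise $\Rightarrow$ a pure flip sequence from $D$ to $D_1$ exists.'' That implication is exactly the sufficiency half of Theorem \ref{thm1} of this paper, and the paper proves Theorem \ref{thm1} \emph{from} Lemma \ref{lem2}; so within the paper's logical architecture your argument is circular. Nor does \cite{Felsner}, as invoked here, hand it to you off the shelf: what the paper imports from there (Lemma \ref{lem1} and the distributive-lattice theorem) gives flip-reversibility of a single ccw cycle and the lattice structure, but establishing the dominance characterization of $\preccurlyeq$ by face potentials is precisely the point of Theorem \ref{thm1}. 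Your closing justification (``the downward contribution of $C^-$ dominates the upward contributions, which is exactly what guarantees a monotone descent'') merely restates the nonnegativity you computed; it does not explain how flips, which only ever reverse ccw facial cycles, can turn the cw holes around.

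That is where the paper does the real work, by induction on $t$. Strong connectivity supplies shortest directed paths $P_1$ from $C^-$ to $C_1^+$ and $P_2$ back, giving a directed path $P$ that threads through some of the holes, say $C_1^+,\dots,C_k^+$. One then splices two ccw cycles: $C_1$, built from $P$, the arcs $C_i^+(v_i\to v_i')$ traversed in their own (cw) direction, and the arc $C^-(u'\to u)$; and, after $C_1$ has been reversed via Lemma \ref{lem1} together with the induction hypothesis for the holes it contains, $C_2$, built from $P^{-1}$, the complementary arcs $C_i^+(v_i'\to v_i)$, and $C^-(u\to u')$. Reversing $C_1$ and then $C_2$ reverses $C^-$ and every hole, restores the orientation of $P$, and, since ${\cal F}_{\rm int}(C_1)$ and ${\cal F}_{\rm int}(C_2)$ partition $S$, flips each face of $S$ exactly once; the degenerate non-simple case $u=u'$ is handled by splitting that vertex. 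If you want to keep your top-down route, you must first prove the dominance characterization independently of Lemma \ref{lem2} — and any such proof will, in effect, have to contain a construction of this kind.
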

\begin{proof} We apply induction on the number $t$. The assertion follows directly by Lemma \ref{lem1} if $t=0$.

Since $D$ is strongly connected, there is a directed path $P_1$ from a vertex $u$ on $C^-$ to a vertex $v$ on $C^+_1$ (in the degenerated case when  $C^-$  and $C^+_1$ has a common vertex, we may have $u=v$) and a directed path $P_2$ from a vertex $v'$ on $C^+_1$ to a vertex $u'$ on $C^-$. Moreover, we may assume that $P_1$ and $P_2$  are shortest. This means that, $u$ (resp., $v$) is the only common vertex of $P_1$ and $C^-$ (resp., $C^+_1$) while  $u'$ (resp., $v'$) is the only common vertex of $P_2$ and $C^-$ (resp., $C^+_1$).  Let
$$P=P_1(u\rightarrow v)\cup C^+_1(v\rightarrow v')\cup P_2(v'\rightarrow u'),$$
 where for a directed path or cycle $W$ and two vertices $u$ and $v$ on $W$, $W(u\rightarrow v)$ is the section of $W$  from $u$ to $v$. Thus, $P$ is a directed path that visits $C^+_1$ exactly once.

 We notice that $P$ may visit more cycles  in ${\cal C}^+$ other than $C^+_1$, say $C^+_2,\cdots,$ $C^+_k$ and, without loss of generality, we assume that $P$ successively visits $C^+_1,C^+_2,$ $\cdots,C^+_k$, see Figure 1(a). Moreover, along with the directed path $P$,  we may assume that $P$ visits $C^+_i$ exactly once and that $v_i$ (resp., $v'_i$) is the first (resp., last) vertex on $C^+_i$ for each $i\in\{1,2,\cdots,k\}$. On the other hand, since $P$ is shortest, $u$ and $u'$ are the only common vertices of $P$ and $C^-$.
\begin{center}
\scalebox{0.4}{\includegraphics{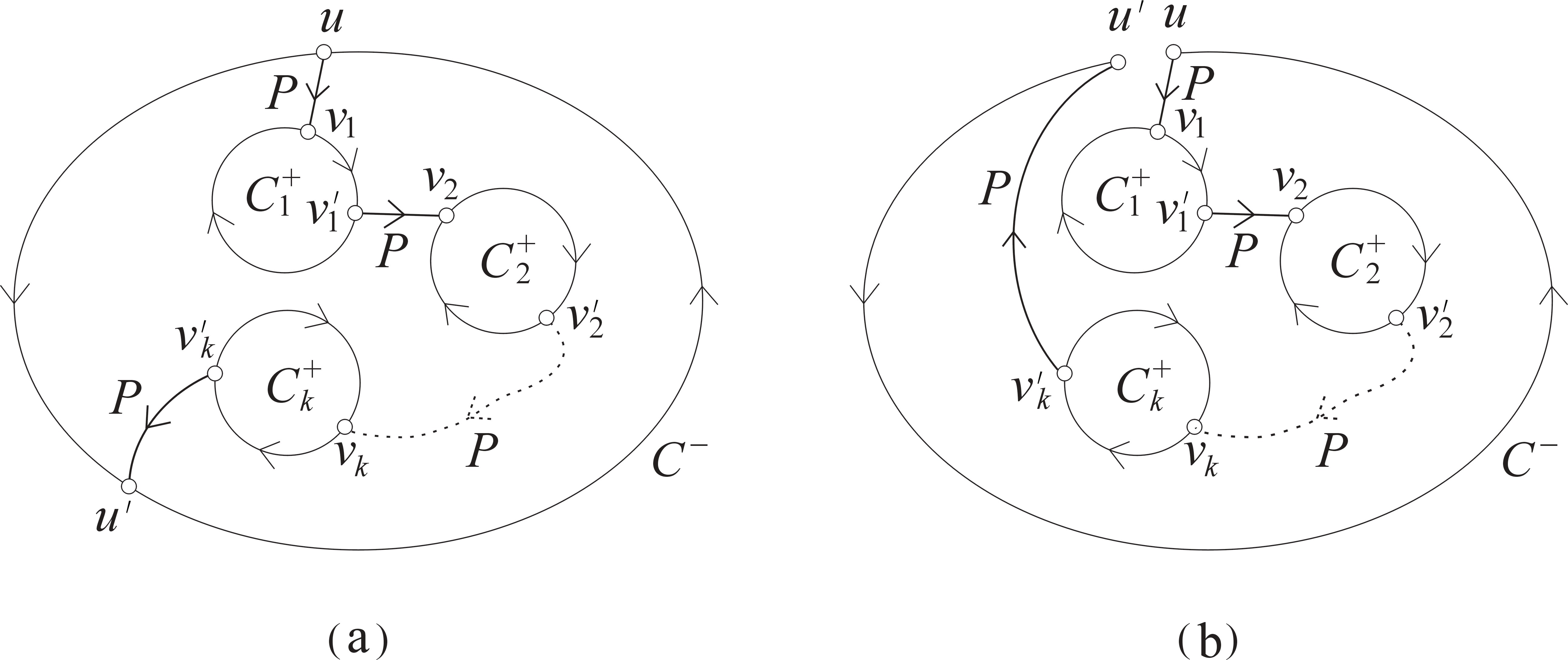}}\\
{\bf Figure 1.}
\end{center}

 Consider the directed cycle
$$C_1=P(u\rightarrow v_1)\cup C^+_1(v_1\rightarrow v'_1)\cup P(v'_1\rightarrow v_2)\cup C^+_2(v_2\rightarrow v'_2)\cup\cdots$$
 $$\cup\ C^+_k(v_k\rightarrow v'_k)\cup P(v'_k\rightarrow u')\cup C^-(u'\rightarrow u).$$

Clearly $C_1$ is a ccw cycle (possibly not simple) and contains less number of cw cycles in ${\cal C}^+$ than $C^-$ does. Moreover, since $G$ is 2-connected and $D$ is strongly connected, we can see that  the subgraph of $G$ induced by the vertices on $C_1$ and in the interior region of $C_1$ is still 2-connected and the sub-orientation of $D$ restricted on the sugraph is still strongly connected. Thus, if $C_1$ is simple then by the induction hypothesis, $C_1$ and those cw cycles $C^+_i$ contained in $C_1$ can be reversed by a flip sequence $F_1$ consisting of
 \begin{equation}
 \left|{\cal F}_{\rm int}(C_1)\setminus\bigcup_{{\cal F}_{\rm int}(C^+_i)\subset{\cal F}_{\rm int}(C_1)}{\cal F}_{\rm int}(C^+_i)\right|
 \end{equation}
flips. If $C_1$ is not simple, i.e., $u=u'$, then we can split $u=u'$ into two different vertices $u$ and $u'$ so that $C_1$ become to be a simple cycle $C^*_1$, see Figure 1(b). Notice that the `splitting' does not effect any flips since a flip only involves the edges of a facial cycle. Moreover, ${\cal F}_{\rm int}(C^*_1)={\cal F}_{\rm int}(C_1)$. The remaining discussion is analogous.

After $F_1$ being taken, the cycle
 $$C_2=P^{-1}(u'\rightarrow v'_k)\cup C^+_k(v'_k\rightarrow v_k)\cup P^{-1}(v_k\rightarrow v'_{k-1})\cup C^+_{k-1}(v'_{k-1}\rightarrow v_{k-1})\cup\cdots$$
 $$\cup C^+_1(v'_1\rightarrow v_1)\cup P^{-1}(v_1\rightarrow u)\cup C^-(u\rightarrow u')$$
is ccw, where $P^{-1}$ is the reversal of $P$. Again by the induction hypothesis, $C_2$  and those  $C^+_i$ contained in $C_2$ can be reversed by a flip sequence $F_2$ consisting of
\begin{equation}
\left|{\cal F}_{\rm int}(C_2)\setminus\bigcup_{{\cal F}_{\rm int}(C^+_i)\subset{\cal F}_{\rm int}(C_2)}{\cal F}_{\rm int}(C^+_i)\right|
\end{equation}
flips. By the construction of $C_1$ and $C_2$ we have
$${\cal F}_{\rm int}(C_1)\cup {\cal F}_{\rm int}(C_2)={\cal F}_{\rm int}(C^-)\setminus\bigcup_{i=1}^k{\cal F}_{\rm int}(C^+_i).$$
Moreover,
$$\{C^+_i: {{\cal F}_{\rm int}(C^+_i)\subset{\cal F}_{\rm int}(C_1)}\}\cup\{C^+_i: {{\cal F}_{\rm int}(C^+_i)\subset{\cal F}_{\rm int}(C_2)}\}\cup\bigcup_{i=1}^k\{C^+_i\}={\cal C}^+.$$

 Hence,  (1) follows directly from (2) and (3). This completes the proof.
\end{proof}

Let $f_1$ and $f_2$ be two adjacent faces and let $e$ be a common edge of $f_1$ and $f_2$. Along with the direction of $e$, if $f_1$ (resp., $f_2$) lies to the left  side of $e$ then we say that $f_1$ (resp., $f_2$)  is {\it left  of} $e$.

Following the idea of `length function' introduced in \cite{Hassin} and `$\alpha$-potential' introduced in \cite{Felsner}, we give the following definition.

\noindent{\bf Definition 3.1}\ For an oriented Eulerian subgraph $F$ of an $\alpha$-orientation $D$, define the $F$-{\it potential} $z_{F}(f)$ of $D$ as a function which assigns an integer to each face $f$ of $D$ according to the following rule:\\
1. $z_{F}(f_{\rm out})=0$;\\
2. if $f_1$ and $f_2$ are two faces sharing a common edge $e$ then
\[z_{F}(f_2)=
\begin{cases}
 \ z_{F}(f_1)+1,& \text{if\ $e\in F$\ and\ $f_2$\ is\ left\ of\ $e$},\\
 \ z_{F}(f_1)-1,& \text{if\ $e\in F$\ and\ $f_1$\ is\ left\ of\ $e$},\\
 \ z_{F}(f_1),& \text{otherwise}.\\
\end{cases}
\]

Let  ${\cal C}$ be a standard cycle system of $F$. By the definition of $z_{F}(f)$, for any face $f$, one can see that  the value of $z_{F}(f)$ is determined by those cycles in ${\cal C}$ that contain $f$ as an interior face. More specifically,
\begin{equation}
z_{F}(f)=c^--c^+,
\end{equation}
where $c^-$ and $c^+$ are the numbers of ccw and cw cycles in ${\cal C}$ which contain $f$ as an interior face, respectively, as illustrated in Figure 2. We notice that the value of $c^--c^+$ is a constant for any standard cycle system ${\cal C}$ of $F$. This means that $z_{F}(f)$ is determined uniquely by $F$ and therefore,  is well defined.
\begin{center}
\scalebox{0.6}{\includegraphics{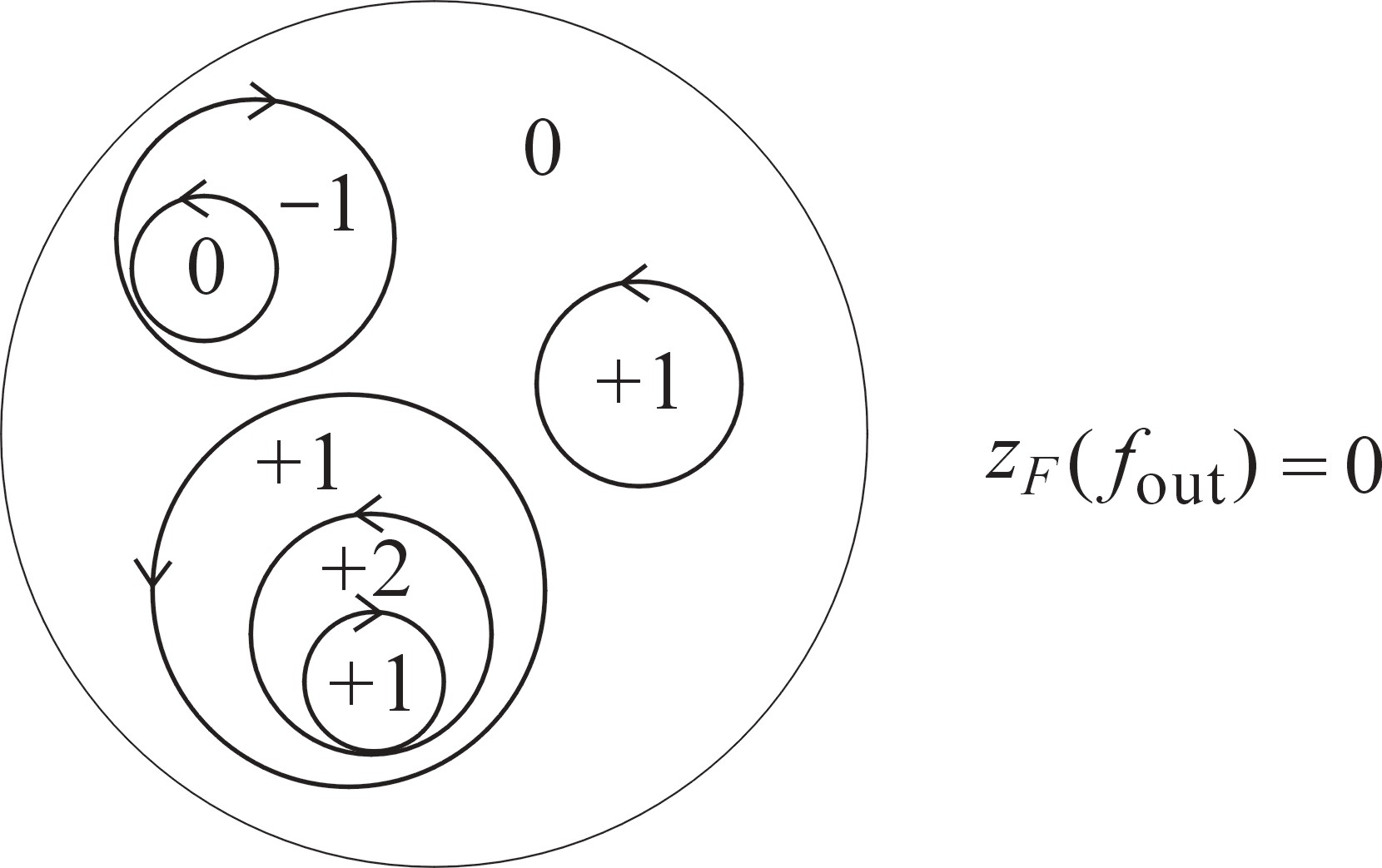}}\\
{\textbf{Figure\ 2}. The directed cycles in ${\cal C}$ are indicated in thick circles and the outer facial cycle is indicated in the thin circle.}
\end{center}

For two $\alpha$-orientations $D$ and $D'$, let $D'-D$ be obtained from $D'$ by removing those edges which has the same direction with $D$. We note that $D'-D$ is oriented Eulerian.

\begin{thm}\label{thm1} Let $D$ and $D'$ be two strongly connected $\alpha$-orientations of a 2-connected plane graph $G$.  Then $D\preccurlyeq D'$ if and only if, for any face $f\in{\cal F}(G)$,
\begin{equation}
 z_{D'-D}(f)\geq 0.
\end{equation}
Moreover, if $D\preccurlyeq D'$ then
\begin{equation}
d(D',D)=\sum\limits_{f\in{\cal F}(G)} z_{D'-D}(f).
\end{equation}
\end{thm}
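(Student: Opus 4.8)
The whole argument rests on a single computation: how one flip changes the potential $z_{\,\cdot\,-D}$. I would first isolate the following fact. If $D''$ is obtained from $D'$ by flipping a ccw facial cycle $\partial f_0$ (ccw $\to$ cw), then
\[
z_{D''-D}(f_0)=z_{D'-D}(f_0)-1,\qquad z_{D''-D}(f)=z_{D'-D}(f)\ \ (f\ne f_0).
\]
Indeed $D''$ and $D'$ agree off $\partial f_0$, and on $\partial f_0$ every edge is reversed, so $D''-D$ and $D'-D$ agree off $\partial f_0$ while on $\partial f_0$ the membership in the difference subgraph is toggled and the orientation reversed. Since the rule of Definition 3.1 is additive in the oriented edges of the difference subgraph and reversing an edge negates its contribution, this says exactly that $z_{D''-D}=z_{D'-D}-z_{\partial f_0}$, where $z_{\partial f_0}$ is the potential of the single ccw cycle $\partial f_0$. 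As $\partial f_0$ is a facial cycle its only interior face is $f_0$, so by the identity $z=c^--c^+$ we get $z_{\partial f_0}=1$ at $f_0$ and $0$ elsewhere, which is the claim.

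Granting this, both the condition (5) and the formula (6) fall out from the necessity side at once. Suppose $D\preccurlyeq D'$ and fix \emph{any} realizing flip sequence $D'=D_0,D_1,\dots,D_m=D$. Every flip is ccw $\to$ cw, so by the fact each step lowers $z_{\,\cdot\,-D}$ by $1$ at the flipped face and leaves all other faces unchanged. Telescoping, and using $z_{D-D}\equiv 0$, gives for every face $f$ that $z_{D'-D}(f)$ equals the number of flips of the sequence taken on $f$, which is $\ge 0$; this proves that (5) is necessary. Summing over $f$ shows that the length $m$ of \emph{every} realizing sequence equals $\sum_f z_{D'-D}(f)$, so in particular the minimum $d(D',D)$ equals $\sum_f z_{D'-D}(f)$, which is (6).

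It remains to prove sufficiency: if $z_{D'-D}(f)\ge 0$ for all $f$ then $D\preccurlyeq D'$. Here I would induct on the size of a standard cycle system $\mathcal C$ of $F=D'-D$, obtained from Lemma \ref{lem3}. If $\mathcal C=\emptyset$ then $D'=D$. Otherwise $\mathcal C$ must contain a ccw cycle, for an innermost cycle of $\mathcal C$, were it cw, would force $z_F=-1$ on its interior, contradicting (5). I would then choose a ccw cycle $C^-\in\mathcal C$ whose interior is inclusion-minimal among the ccw cycles of $\mathcal C$, and let $C_1^+,\dots,C_t^+$ be the inclusion-maximal cw cycles of $\mathcal C$ contained in $C^-$; by laminarity these are edge disjoint, uncrossed, pairwise exclusive and contained in $C^-$, so Lemma \ref{lem2} applies and reverses $C^-,C_1^+,\dots,C_t^+$ using exactly $N=\bigl|\mathcal F_{\rm int}(C^-)\setminus\bigcup_i\mathcal F_{\rm int}(C^+_i)\bigr|$ flips, leaving all other edges fixed. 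The resulting $D''$ is again a strongly connected $\alpha$-orientation with $D''-D$ decomposed by $\mathcal C\setminus\{C^-,C_1^+,\dots,C_t^+\}$, and by $z=c^--c^+$ the effect on the potential is to subtract $1$ exactly on $T:=\mathcal F_{\rm int}(C^-)\setminus\bigcup_i\mathcal F_{\rm int}(C^+_i)$ (the $+1$ from deleting each cw $C_i^+$ cancels the $-1$ from deleting $C^-$ inside $\bigcup_i\mathcal F_{\rm int}(C^+_i)$).

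The delicate point I expect to be the crux is checking that this subtraction never violates $z\ge 0$; this is precisely where the hypothesis and the innermost/maximal choices are used. Because $C^-$ is innermost among ccw cycles, no ccw cycle lies strictly inside $C^-$, and because the $C_i^+$ are the maximal cw cycles inside $C^-$, one sees that \emph{no} $F$-edge has both of its incident faces in $T$: an edge of $C^-$ has a face outside $C^-$, an edge of some $C_i^+$ has a face inside $C_i^+$, and edges of cycles containing $C^-$ have both faces outside $C^-$. Hence $z_F$ is constant on the (connected) region $T$; evaluating it at a face just inside a boundary edge $e$ of $C^-$, where $e\in F$ and that face is left of $e$, gives $z_F\equiv z_F(f^-)+1\ge 1$ on $T$, with $f^-$ the face just outside $e$. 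Thus $z_{D''-D}\ge 0$ is preserved, the induction hypothesis yields a flip sequence from $D''$ to $D$, and prepending the $N$ flips of Lemma \ref{lem2} exhibits $D\preccurlyeq D'$. Since $N$ equals the drop $\sum_f\bigl(z_{D'-D}(f)-z_{D''-D}(f)\bigr)$, the bookkeeping telescopes and re-proves (6) along the way.
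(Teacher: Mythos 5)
Your proof is correct in substance and follows the paper's overall architecture: necessity by showing that the number of flips taken on each face equals $z_{D'-D}(f)$, and sufficiency by decomposing $D'-D$ into a standard cycle system (Lemma \ref{lem3}) and inducting with Lemma \ref{lem2}. The one genuine structural difference is that your induction step is anchored dually to the paper's. The paper first picks a \emph{globally maximal} cw cycle $C_1^+$, then a minimal ccw cycle $C^-$ containing it (whose existence is exactly what condition (5) supplies), and takes $C_1^+,\dots,C_t^+$ to be the maximal cw cycles inside $C^-$; the payoff is that no cw cycle of $\mathcal{C}$ can then contain $C^-$, so every cycle of $\mathcal{C}$ containing a face $f$ of $T={\cal F}_{\rm int}(C^-)\setminus\bigcup_i{\cal F}_{\rm int}(C_i^+)$ is ccw, and $z_{D'-D}(f)\geq 1$ falls out of the identity (4) in one line. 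Your choice (inclusion-minimal ccw $C^-$ first) also works, but it admits cw ancestors of $C^-$, which is why you are forced into the boundary-crossing computation. On the other side of the ledger, your per-flip identity $z_{D''-D}=z_{D'-D}-z_{\partial f_0}$ is a clean packaging of what the paper proves in aggregate form, and it makes explicit the pleasant fact that \emph{every} realizing flip sequence has length $\sum_f z_{D'-D}(f)$, which is only implicit in the paper.

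Two local imprecisions in your sufficiency step, both repairable. First, ``an innermost cycle of $\mathcal{C}$, were it cw, would force $z_F=-1$ on its interior'' is false as stated: a ccw cycle enclosing a single cw cycle gives $z_F=0$ inside the inner (cw, innermost) cycle with (5) intact. What is true, and all you need, is that if $\mathcal{C}$ contained no ccw cycle at all, then an innermost (necessarily cw) cycle would have $z_F\leq -1$ just inside it. Second, your claim that $z_F$ is constant on $T$ because no $F$-edge has both sides in $T$ tacitly assumes $T$ is connected in the dual, which can fail: the $C_i^+$ may touch each other or touch $C^-$ at vertices and pinch $T$ into several components, and a component can even be bounded entirely by edges of the $C_i^+$'s, in which case your evaluation ``just inside a boundary edge of $C^-$'' says nothing about it. The conclusion $z_F\geq 1$ on $T$ is nevertheless true; fix it either by observing that every face of $T$ has the same set of containing cycles of $\mathcal{C}$ (namely $C^-$ together with the cycles containing $C^-$), so constancy follows directly from (4), or componentwise: each component of $T$ is separated from its complement only by edges of $C^-$ or of some $C_i^+$, and crossing such an edge into $T$ adds $+1$ (the interior of a ccw cycle, and the exterior of a cw cycle, lie to the left) to a potential value that is $\geq 0$ by (5). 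With these two repairs your argument is complete and yields the theorem, including formula (6), exactly as in the paper.
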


\begin{proof}    Assume firstly that $D\preccurlyeq D'$. Let $F=\{f_1,f_2,\cdots,f_q\}$ be a flip sequence that transforms $D'$ into $D$.

For any face $f\in{\cal F}(G)$, consider the times $t(f)$ of flips in $F$  taking on $f$. Since the outer face $f_{\rm out}(G)$ is not involved in any flip, we have $t(f)=0$ if $f=f_{\rm out}(G)$. Now assume that $f'$ is a face which shares a common edge $e$ with $f$. If $e\in D'-D$, then the orientation of $e$ changes after $F$ being taken. Moreover, a flip to change the orientation of $e$ must be taken on the left face of $e$. This implies that $t(f)=t(f')+1$ if $f$ is left of $e$ or $t(f)=t(f')-1$  if $f'$ is left of $e$. If $e\notin D'-D$, then the orientation of $e$ does not change after $F$ being taken. This means that, if a flip in $F$ takes on $f$ then there is another flip that takes on $f'$ to keep the orientation of $e$ invariant and, hence,  $t(f)=t(f')$.  As a result, we have $t(f)= z_{D'-D}(f)$. Notice that $t(f)\geq 0$ for any $f\in {\cal F}(G)$. The necessity follows.

Conversely, assume that (5) holds. Let ${\cal C}$ be a standard cycle system of $D'-D$. If all the cycles in ${\cal C}$ are ccw then $D\preccurlyeq D'$ follows directly by Lemma \ref{lem1}. Now let $C^+_1$ be a maximal cw cycle in ${\cal C}$, that is, there is no  other cw cycle containing $C^+_1$ in ${\cal C}$. Then by (5), there is a ccw cycle $C^-\in{\cal C}$ satisfying  ${\cal F}_{\rm int}(C^+_1)\subset {\cal F}_{\rm int}(C^-)$ and, we choose $C^-$ to be minimal. Notice that $C^-$ may contain one more such cw cycles. We denote  by $C^+_1,C^+_2,\cdots,C^+_t$ all the maximal cw cycles of ${\cal C}$ which are contained in $C^-$.

Further, we notice that  $D'$ is strongly connected and $C^-,C^+_1,C^+_2,\cdots,C^+_t$ are in $D'$. So by Lemma \ref{lem2}, there is a flip sequence $F_1$ that reverses  $C^-$ and $C^+_i,i=1,2,\cdots,t$ and keep the orientations of all other edges of $D'$ invariant.
Let $(D'-D)^*={\cal C}\setminus\{C^-,C^+_1,C^+_2,\cdots,C^+_t\}$. Then by the choice of $C^-,C^+_1,$ $C^+_2,\cdots,C^+_t$, for any face $f$, we have \[z_{(D'-D)^*}(f)=
\begin{cases}
 \ z_{(D'-D)}(f)-1,& \text{if\ $f\in{\cal F}_{\rm int}(C^-)\setminus\bigcup_{i=1}^t{\cal F}_{\rm int}(C^+_i)$},\\
 \ z_{(D'-D)}(f),& \text{otherwise}.\\
\end{cases}
\]
 Moreover, by the maximality of $C^+_1,C^+_2,\cdots,C^+_t$, if
 $$f\in{\cal F}_{\rm int}(C^-)\setminus\bigcup_{i=1}^t{\cal F}_{\rm int}(C^+_i)$$
 then a cycle in ${\cal C}$ that contains $f$ as an interior face must be ccw. So by the definition of $z_{(D'-D)}(f)$, we have $z_{(D'-D)}(f)\geq 1$.

   The above discussion implies that $z_{(D'-D)^*}(f)\geq 0$ for any $f\in{\cal F}_{\rm int}(C^-)$. So by a simple induction on the number of the cycles in ${\cal C}$, there is a flip sequence $F_2$ that reverses  all cycles in $(D'-D)^*$.

Thus, $F_1\cup F_2$ reverse  all the cycles in ${\cal C}$, that is, transform $D'$ into $D$. The sufficiency follows.

 Finally, notice that $|F_1\cup F_2|$ equals the sum of the times $t(f)$ of flips in $F_1\cup F_2$ taking on each face $f\in{\cal F}(G)$. So by the proof of the necessity,
$$d(D',D)=|F_1\cup F_2|=\sum\limits_{f\in{\cal F}(G)}t(f)= \sum\limits_{f\in{\cal F}(G)}z_{D'-D}(f),$$
which completes our proof.
 \end{proof}

The following is an equivalent but more intuitive representation of (6).
\begin{cor} Let $D$ and $D'$ be two strongly connected $\alpha$-orientations of a 2-connected plane graph $G$ with $D\preccurlyeq D'$, and let $C_1^-,C_2^-,\cdots,C_s^-$ and $C_1^+,C_2^+,\cdots,C_t^+$ be all the ccw and cw cycles in  a standard cycle system of $D'-D$, respectively. Then
$$d(D',D)=\sum\limits_{i=1}^s|{\cal F}_{\rm int}(C_i^-)|-\sum\limits_{j=1}^t|{\cal F}_{\rm int}(C_j^+)|.$$
\end{cor}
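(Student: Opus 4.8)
The plan is to derive the corollary directly from equation (6) of Theorem \ref{thm1} by reorganizing the face-sum according to formula (4), which expresses the $F$-potential in terms of the standard cycle system. First I would fix a standard cycle system ${\cal C}=\{C_1^-,\dots,C_s^-,C_1^+,\dots,C_t^+\}$ of $D'-D$, where the $C_i^-$ are the ccw cycles and the $C_j^+$ the cw cycles. Since $D\preccurlyeq D'$, Theorem \ref{thm1} gives
\begin{equation}
d(D',D)=\sum_{f\in{\cal F}(G)} z_{D'-D}(f).
\end{equation}

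Next I would substitute the closed form (4) for the potential. By (4), for each face $f$ we have $z_{D'-D}(f)=c^-(f)-c^+(f)$, where $c^-(f)$ (resp.\ $c^+(f)$) counts the ccw (resp.\ cw) cycles of ${\cal C}$ having $f$ in their interior. Summing over all faces and interchanging the order of summation, the contribution of each ccw cycle $C_i^-$ to $\sum_f c^-(f)$ is precisely the number of faces it contains as interior faces, namely $|{\cal F}_{\rm int}(C_i^-)|$, and similarly each cw cycle $C_j^+$ contributes $|{\cal F}_{\rm int}(C_j^+)|$ to $\sum_f c^+(f)$. This yields
\begin{equation}
d(D',D)=\sum_{f\in{\cal F}(G)}\bigl(c^-(f)-c^+(f)\bigr)=\sum_{i=1}^s|{\cal F}_{\rm int}(C_i^-)|-\sum_{j=1}^t|{\cal F}_{\rm int}(C_j^+)|,
\end{equation}
which is exactly the claimed identity.

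The only point requiring care is the double-counting argument, i.e.\ justifying the interchange of summation. The cleanest way is to note that $\sum_{f}c^-(f)=\sum_{i=1}^s\,|\{f : f\in{\cal F}_{\rm int}(C_i^-)\}|$ because $c^-(f)$ is by definition the number of indices $i$ with $f\in{\cal F}_{\rm int}(C_i^-)$; this is just counting the incidences in the bipartite relation ``face $f$ lies interior to cycle $C_i^-$'' in two ways. I do not anticipate any genuine obstacle here, since equation (4) already establishes that $z_{D'-D}(f)$ is independent of the chosen standard cycle system, so the right-hand side of the corollary is automatically well defined and agrees with (6). The whole argument is therefore a short rewriting of Theorem \ref{thm1}, and the main (mild) subtlety is simply to state the Fubini-type exchange explicitly rather than leaving it implicit.
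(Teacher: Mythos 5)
Your proof is correct and follows the same route the paper intends: the corollary is stated there without proof as an ``equivalent representation'' of (6), obtained exactly as you do by substituting the closed form (4), $z_{D'-D}(f)=c^-(f)-c^+(f)$, into $d(D',D)=\sum_{f\in{\cal F}(G)}z_{D'-D}(f)$ and exchanging the order of summation. Your explicit double-counting justification and the remark that (4) makes the right-hand side independent of the chosen standard cycle system are both accurate and merely spell out what the paper leaves implicit.
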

\section{Graphs embedded on the sphere}
From the graph embedding point of view, a graph embedded on the sphere is essentially the same as  embedded on the plane. However, since a flip of an $\alpha$-orientation of a graph embedded on the sphere may be taken on any face of the graph while a flip for a graph embedded on the plane takes only on its inner face, the flip-distances for these two types of graph embedding are different. More specifically, we will see that, in contrast to plane graphs,  any two $\alpha$-orientations of a sphere graph can always be transformed from each other by a flip sequence.

Let $D$ and  $D'$ be two $\alpha$-orientations of a sphere graph $G$. Choose an arbitrary face $f\in {\cal F}(G)$, similar to the $(D'-D)$-potential $z_{D'-D}(f)$ for plane graph, we define
$$z_{D'-D}(f,g):{\cal F}(G)\rightarrow \mathbf{Z}$$
to be the function which assigns an integer to each face $g$ of $G$ according to the following rule, see Figure 3:\\
1.\ $z_{D'-D}(f,f)=0$;\\
2.\ if $g_1$ and $g_2$ are two faces sharing a common edge $e$ then
\[z_{D'-D}(f,g_2)=
\begin{cases}
 \ z_{D'-D}(f,g_1)+1,& \text{if\ $e\in D'-D$\ and\ $g_2$\ is\ left\ of\ $e$},\\
 \ z_{D'-D}(f,g_1)-1,& \text{if\ $e\in D'-D$\ and\ $g_1$\ is\ left\ of\ $e$},\\
 \ z_{D'-D}(f,g_1),& \text{otherwise}.\\
\end{cases}
\]

\begin{center}
\scalebox{0.33}{\includegraphics{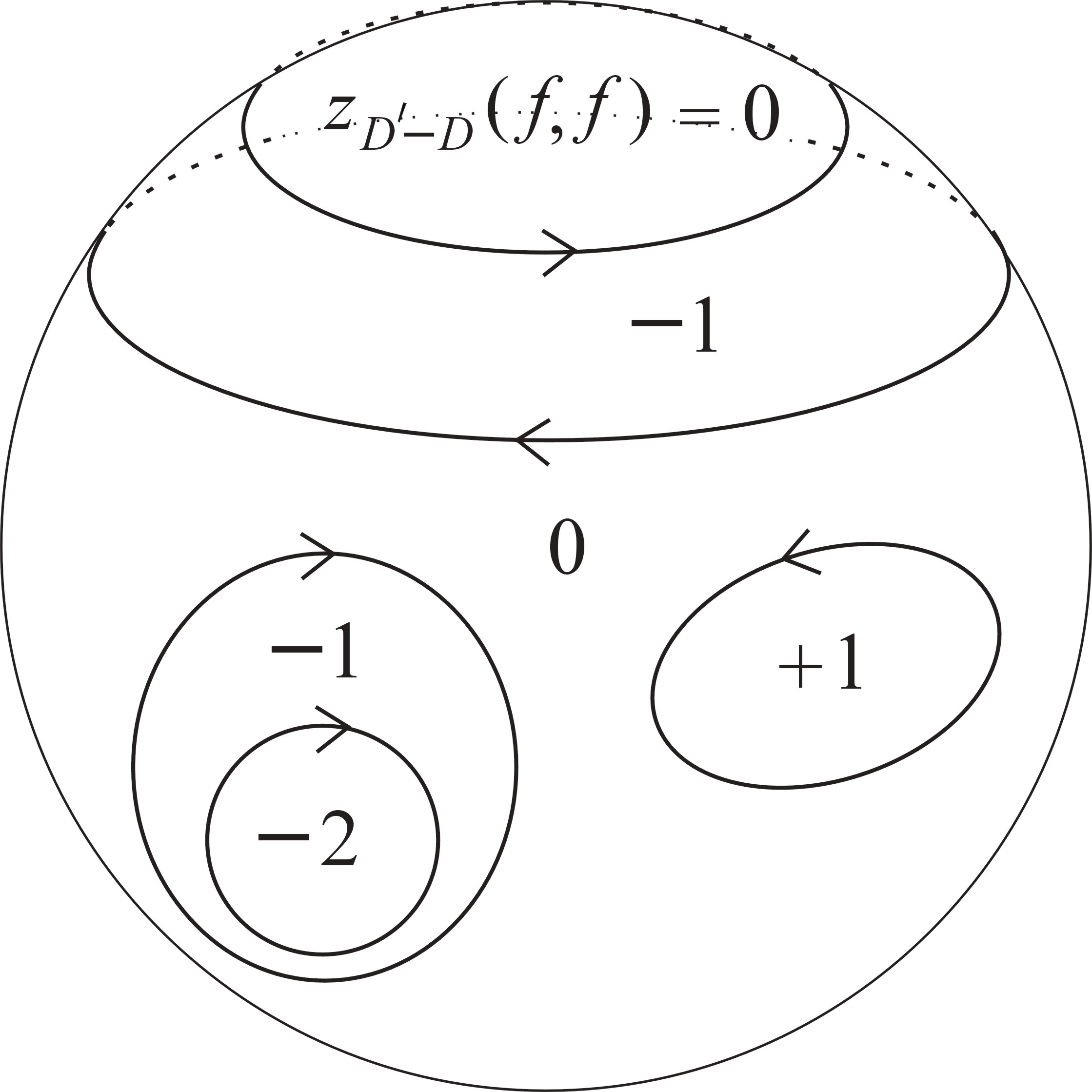}}\\
{\textbf{Figure\ 3}.}
\end{center}

For any $g\in {\cal F}(G)$, similar to the $F$-potential on plane graph,  we can see that $z_{D'-D}(f,g)$ is well defined.

\begin{thm}\label{sphere} Let $D$ and $D'$ be two strongly connected $\alpha$-orientations of a 2-connected sphere graph $G$ and let $f$ be an arbitrary face of $G$£®Then
\begin{equation*}
d(D',D)=\sum\limits_{g\in {\cal F}(G)}(z_{D'-D}(f,g)-z_{\min}(f)),
\end{equation*}
where
$$z_{\min}(f)=\min\{z_{D'-D}(f,g):g\in{\cal F}(G)\}).$$
\end{thm}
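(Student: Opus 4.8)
The plan is to reduce the sphere problem to the planar case of Theorem \ref{thm1} by promoting an appropriately chosen face to the role of outer face, and then to match this with a lower bound obtained from counting flips face by face. For the upper bound, I would let $g_0\in{\cal F}(G)$ be a face attaining the minimum, i.e.\ $z_{D'-D}(f,g_0)=z_{\min}(f)$, and regard $G$ as a plane graph whose outer face is $g_0$. Since on the sphere every face is an ordinary face, $D$ and $D'$ remain strongly connected $\alpha$-orientations of this plane graph. The plane $(D'-D)$-potential of Definition 3.1, which pins the value $0$ at $g_0$, differs from the sphere potential only by an additive constant, namely $z_{D'-D}(f,g)-z_{\min}(f)$ for every face $g$, because both functions obey the identical local recurrence across each shared edge. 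By the choice of $g_0$ this plane potential is nonnegative on all faces, so Theorem \ref{thm1} yields $D\preccurlyeq D'$ in this plane embedding together with a flip sequence of length $\sum_{g}\bigl(z_{D'-D}(f,g)-z_{\min}(f)\bigr)$ that takes no flip on $g_0$. Such a sequence is a legitimate flip sequence on the sphere, which gives
$$d(D',D)\le\sum_{g\in{\cal F}(G)}\bigl(z_{D'-D}(f,g)-z_{\min}(f)\bigr).$$

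Next I would prove the matching lower bound. Consider an arbitrary flip sequence transforming $D'$ into $D$ on the sphere and, for each face $g$, let $t(g)$ denote the number of its flips taken on $g$. The edge-by-edge analysis carried out in the necessity part of Theorem \ref{thm1} is purely local and hence applies verbatim on the sphere: for two faces $g_1,g_2$ sharing an edge $e$, a flip reorients $e$ only when taken on the face lying to the left of $e$, so $t(g_2)-t(g_1)$ obeys exactly the recurrence defining $z_{D'-D}(f,\cdot)$. Since the sphere has no distinguished face pinned to zero, this determines only the differences, whence $t(g)=t(f)+z_{D'-D}(f,g)$ for all $g$, where $t(f)$ is the nonnegative but otherwise free number of flips taken on the base face $f$. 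Evaluating $t(g)\ge 0$ at a minimizing face forces $t(f)\ge -z_{\min}(f)$, and therefore the total number of flips satisfies
$$\sum_{g\in{\cal F}(G)}t(g)=|{\cal F}(G)|\,t(f)+\sum_{g\in{\cal F}(G)}z_{D'-D}(f,g)\ \ge\ \sum_{g\in{\cal F}(G)}\bigl(z_{D'-D}(f,g)-z_{\min}(f)\bigr).$$
Combining the two bounds gives the asserted equality, and independence from the choice of $f$ is automatic, since replacing $f$ shifts every $z_{D'-D}(f,g)$ and $z_{\min}(f)$ by the same constant, leaving the right-hand side unchanged.

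The step I expect to be the main obstacle is the lower bound, specifically justifying that the flip-count function $t$ satisfies the potential recurrence with a single free additive constant $t(f)$ rather than a pinned value. On the plane this constant is forced to $0$ by the outer face, whereas on the sphere it becomes precisely the quantity we optimize; the geometric content of the theorem is exactly that the optimal choice $t(f)=-z_{\min}(f)$ corresponds to promoting a minimum-potential face to the role of outer face. Some care is also needed to confirm that the flip sequence realizing the plane distance never uses the face $g_0$, so that it transfers without change to the sphere and certifies both that any two $\alpha$-orientations of a sphere graph are flip-connected and that the upper bound is attained.
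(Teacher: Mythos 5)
Your proof is correct, and while your lower bound coincides with the paper's (the paper likewise takes a minimum flip sequence, shows the flip-count function $z'(\cdot)$ satisfies the same difference relations as $z_{D'-D}(f,\cdot)$, and uses nonnegativity at a minimizing face), your upper bound takes a genuinely different and simpler route. The paper constructs the optimal flip sequence by choosing adjacent faces $g_0,g_1$ with $z_{D'-D}(f,g_0)=z_{\min}(f)<z_{D'-D}(f,g_1)$, locating a cycle $C_0$ of a standard cycle system of $D'-D$ through their common edge, cutting the sphere along $C_0$ and along the maximal cycles $C_1,\dots,C_t$ of ${\cal C}\setminus\{C_0\}$ into the plane pieces $G^-_{C_i}$, verifying for each piece that the restricted orientations are strongly connected, that ccw-ness is preserved, and that the restricted plane potentials equal $z_{D'-D}(f,g)-z_{\min}(f)\ge 0$, and then applying Theorem \ref{thm1} separately to each piece and summing. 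You instead apply Theorem \ref{thm1} exactly once, to the whole graph re-embedded in the plane with a minimizing face $g_0$ as outer face, observing that the pinned plane potential differs from the sphere potential by the additive constant $-z_{\min}(f)$ and is therefore nonnegative. This avoids all of the paper's bookkeeping (the standard cycle system, the hemispheric decomposition, and the strong-connectivity and orientation checks for the pieces $G^-_{C_i}$), and it makes the conceptual content explicit: flip-connectivity on the sphere is exactly the freedom to choose the outer face, and the optimum is attained by promoting a minimum-potential face. The one point you pass over quickly, which deserves a sentence in a full write-up, is that the re-embedding (puncturing inside $g_0$) must be orientation-preserving so that ``left of $e$'' agrees for every face other than $g_0$; with that, inner-face flips in the plane transfer verbatim to sphere flips, as you claim, and since plane flips never touch the outer face the transfer is automatic. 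Modulo that routine remark, both bounds are sound and your argument is a legitimate, arguably cleaner, alternative to the paper's proof.
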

\begin{proof} Let $F$ be a flip sequence with minimum number of flips, say $q$ flips, that transform  $D'$ into $D$£®For any $g\in{\cal F}(G)$, let $z'(g)$ be the number of flips in $F$ that takes on $g$£®For any two adjacent faces $g$ and $g'$, from the definition of  $z_{D'-D}(f,g)$ we can easily see that
\begin{equation}
z'(g)-z'(g')=z_{D'-D}(f,g)-z_{D'-D}(f,g').
\end{equation}
That is, $z_{D'-D}(f,g_1)-z_{D'-D}(f,g_2)$ depends only on $D$ and $D'$, and does not depend on the choice of $f$£®In general, for any two faces $g$ and $g'$ (not necessarily adjacent), we can always find a face sequence
$$g=g_1,g_2,\cdots,g_p=g',$$
such that $g_i$ and $g_{i+1}$ are adjacent for each $i\in\{1,2,\cdots,p-1\}$. This means that (6) holds for any two faces $g$ and $g'$ of $G$. In particular, let $g'$ be a face such that
$$z_{D'-D}(f,g')=z_{\min}(f).$$
 Then we have
$$z'(g)-z_{D'-D}(f,g)=z'(g')-z_{D'-D}(f,g')=z'(g')-z_{\min}(f).$$

Therefore, the distance from $D'$ to $D$ satisfies
$$d(D',D)=q=\sum\limits_{g\in {\cal F}(G)}z'(g)=\sum\limits_{g\in {\cal F}(G)}(z_{D'-D}(f,g)+z'(g')-z_{\min}(f))$$
$$\geq\sum\limits_{g\in {\cal F}(G)}(z_{D'-D}(f,g)-z_{\min}(f)),$$
where the last inequality holds because $z'(g)\geq 0$ for any face $g\in {\cal F}(G)$£®

We now need only to find a flip sequence consisting of $\sum\limits_{g\in {\cal F}(G)}(z_{D'-D}(f,g)-z_{\min}(f))$ flips which transform  $D'$ into  $D$£®

Let ${\cal C}$ be a standard cycle system of  $D'-D$£®

For a directed cycle $C\in {\cal C}$, we can see that $C$ divides $G$ into two semi-sphere parts. Along with the orientation of $C$, let $G^+_C$  denote the subgraph of $G$ which lies in the right semi-sphere, including $C$ itself£®Similarly, let $G^-_C$ denote the subgraph of $G$ which lies in the left semi-sphere including $C$ itself£®In this way, $G^+_C$ and $G^-_C$ could be considered as two plane graphs. Moreover,  $C$ is cw in $G^+_C$ and ccw in $G^-_C$£®For convenience, we also use $G^+_C$ and  $G^-_C$ to  denote the `sub-orientation' of $D'$ restricted on $G^+_C$ and  $G^-_C$, respectively.

Choose two adjacent faces $g_0,g_1\in{\cal F}(G)$ such that $z_{D'-D}(f,g_0)=z_{\min}(f)$ and $z_{D'-D}(f,g_1)>z_{\min}(f)$. We note that, by the definition of $z_{D'-D}(f,g)$, such pair of $g_0$ and $g_1$ exists since ${\cal C}$ is not empty. Let $e$ be a common edge shared by $g_0$ and $g_1$. Then, $e$ must be on a cycle in ${\cal C}$, say  $C_0$ as shown in Figure 4(a). We draw $G^-_{C_0}$ and $G^+_{C_0}$ on the plane as shown in Figure 4(b) and (c), respectively.

In the plane graph $G^+_{C_0}$, with no loss of generality, let $C_1,C_2,$ $\cdots,C_t$ be all the maximal cycles of the cycle system ${\cal C}\setminus \{C_0\}$, that is, each $C_i$ is not contained in any other cycles of ${\cal C}\setminus \{C_0\}$. Since ${\cal C}$ is a standard cycle system of  $D'-D$, the interior region $B$ bounded by $C_0,C_1,C_2,\cdots,C_t$ does not
\begin{center}
\scalebox{0.62}{\includegraphics{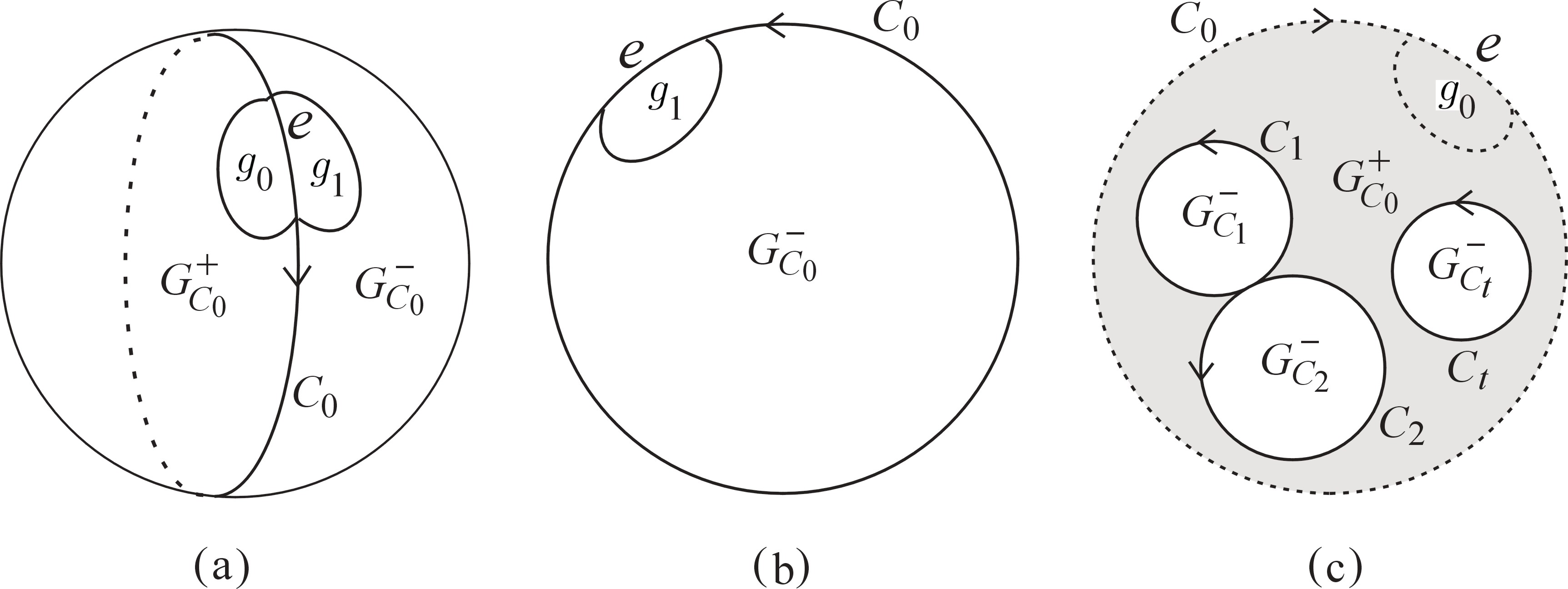}}\\
{{\bf Figure\ 4.} The interior region $B$  is indicated in grey.}
\end{center}
 contain any  cycle in ${\cal C}$. This means that, for any face
 $$g\in {\cal F}_{\rm inner}(G^+_{C_0})\setminus\bigcup_{i=1}^t{\cal F}_{\rm int}(C_i),$$
  we have
 $$z_{D'-D}(f,g)=z_{D'-D}(f,g_0)=z_{\min}(f).$$

Thus, again by the definition of $z_{D'-D}(f,g)$, the directed cycles $C_1,C_2,\cdots,$ $C_t$ are all ccw in the plane graph $G^+_{C_0}$.  Moreover, the plane graphs  $G^-_{C_i},i=0,1,2,\cdots,t$,  have the following properties:\\
1. a facial cycle in $G^-_{C_i}$ is ccw if and only if this facial cycle is ccw in $G$;\\
2. $G^-_{C_0}\cup G^-_{C_1}\cup \cdots \cup G^-_{C_t}$ contains all the cycles of ${\cal C}$;\\
3. $G^-_{C_i}$ is strongly connected.

For $i\in\{0,1,2,\cdots,t\}$, let $D_i$ and $D'_i$ be the orientations $D$ and $D'$ restricted on the plane graph $G^-_{C_i}$, respectively.   Then, for any $g\in {\cal F}_{\rm inner}(G^-_{C_i})$, by the definitions of $z_{D'_i-D_i}(g)$ (see Definition 2.1) and $z_{D'-D}(f,g)$,  we have
$$z_{D'_i-D_i}(g)= z_{D'-D}(f,g)-z_{\min}(f)$$
and therefore $z_{D'_i-D_i}(g)\geq 0$. Thus, by Theorem \ref{thm1}, all the cycles of ${\cal C}$ contained in $G^-_{C_i}$ can be reversed by
$$\sum\limits_{g\in{\cal F}_{\rm inner}(G^-_{C_i})} z_{D'_i-D_i}(g)=\sum\limits_{g\in{\cal F}_{\rm inner}(G^-_{C_i})}(z_{D'-D}(f,g)-z_{\min}(f))$$
flips. Hence, all cycles of ${\cal C}$ can be reversed by
$$\sum\limits_{i=0}^t\sum\limits_{g\in{\cal F}_{\rm inner}(G^-_{C_i})}(z_{D'-D}(f,g)-z_{\min}(f))=\sum\limits_{g\in {\cal F}(G)}(z_{D'-D}(f,g)-z_{\min}(f))$$
flips, where the last equality holds because $z_{D'-D}(f,g)=z_{\min}(f)$  for any
$$g\in{\cal F}(G)\setminus\bigcup_{i=0}^t {\cal F}_{\rm inner}(G^-_{C_i})= {\cal F}_{\rm inner}(G^+_{C_0})\setminus\bigcup_{i=1}^t{\cal F}_{\rm int}(C_i).$$
This completes our proof.
\end{proof}


\begin{thebibliography}{111}

\bibitem{Barrera}  C. Barrera-Cruz, Morphing planar triangulations, Ph.D Thesis,  The University of Waterloo,  Ontario, Canada, 2014.


\bibitem{Bonichon} N. Bonichon, S. Felsner,  M. Mosbah, Convex drawings of 3-connected planar graphs,
                   Algorithmica, 47 (2007), 399-420.

\bibitem{Creed} P.J. Creed, Counting and Sampling Problems on Eulerian Graphs, Ph.D Thesis, University of Edinburgh, 2010.

\bibitem{Disser}  Y. Disser, J. Matuschke, Degree-constrained orientations of embedded graphs, J. Comb. Optim., 31 (2016), 758-773.


\bibitem{Felsner} S. Felsner, Lattice structures from planar graphs,
                 Electron. J. Combin., 11 (2004), \#R15.

\bibitem{Felsner2}S. Felsner,  F. Zickfeld, On the number of planar orientations with prescribed degrees, Electron. J. Combin., 15 (2008), \#R77.

\bibitem{Frank} A. Frank, A.Gy\'{a}rf\'{a}s,  How to orient the edges of a graph,  Colloq. Math. Soc. J\'{a}nos  Bolyai, 18 (1976), 353-364.

\bibitem{Fraysseix} H. de Fraysseix, P.O. de Mendez, On topological aspects of orientation, Discrete Math., 229 (2001),  57-72.

\bibitem{Fraysseix2} H. de Fraysseix, P.O. de Mendez, P. Rosenstiehl, Bipolar orientations revisited, Discrete Appl. Math., 56 (1995), 157-179.

\bibitem{Fusy} E. Fusy, Transversal structures on triangulations: A combinatorial study and straight-line drawings,
               Discrete Math., 309 (2009), 1870-1894.

\bibitem{Fusy2} E. Fusy, D. Poulalhon, G. Schaeffer, {Dissections and trees, with applications to optimal mesh encoding and to random sampling}, In SODA'05: Proceedings of the 16th annual ACM-SIAM Symposium on Discrete Algorithms (2005), pp. 690-699.


\bibitem{Hakimi} S.L. Hakimi, On the degrees of the vertices of a directed graph, Journal of the Franklin Institute, 279 (4) (1965), 290-308.

\bibitem{Hassin} R. Hassin, Maximum flow in ($s,t$) planar networks, Information processing letters, 13(1981), 107-107

\bibitem{Kenyon} R. Kenyon, J. Propp, D.B. Wilson, Trees and matchings,
                 Electron. J. Combin., 7 (1) (2000), \#R25.

\bibitem{Knauer} K.B. Knauer, Partial orders on orientations via cycle flips,  Ph.D thesis, Technische Universit\"{a}t Berlin, Berlin, 2007.

\bibitem{Lam} P.C.B. Lam, H.P. Zhang, A distributive lattice on the set of perfect matchings of a plane bipartite graph, Order, 20 (2003), 13-29.



\bibitem{Nakamoto} A. Nakamoto, K. Ota, T. Tanuma, Three-cycle reversions in oriented planar triangulations, Yokohama Math. J., 44 (1997), 123-139.

\bibitem{Nash} C.St.J.A. Nash-Williams, On orientation, connectivity and odd-vertex pairing in finite graphs, Canad. J. Math., 12 (1960), 555-567.

\bibitem{Propp} J. Propp, Lattice structure for orientations of graphs,    arXiv:math/ 0209005 (2002).


\bibitem{ZhangF} F.J. Zhang, X.F. Guo, R.S. Chen,  $Z$-transformation graphs of perfect matchings of hexagonal systems,
                Discrete Math., 72 (1988), 405-415.

\bibitem{Zhang2} H.P.  Zhang, F.J. Zhang, {Plane elementary bipartite graphs}, Discrete Appl. Math., 105 (2000), 291-311.








\end{thebibliography}
\end{document}